\title{On the genus of the complete tripartite graph $K_{n,n,1}$}
\date{2016-09-09}
  \newenvironment{proof}{\noindent{\bf Proof\,}}{\hspace*{\fill}$\Box$}
  \newenvironment{proofof}[1]{%
  \noindent {\bf Proof of #1}}%
  {\hspace*{\fill}$\Box$}
  \newtheorem{theorem}{Theorem}[section]
  \newtheorem{lemma} [theorem] {Lemma}
\itshape\urlstyle{same}
\begin{document}

\author{Valentas Kurauskas\footnote{Vilnius University, Institute of Mathematics and Informatics, Akademijos 4, LT-08663 Vilnius, Lithuania.}
\footnote{\textcopyright 2016. This manuscript version is made available under the CC-BY-NC-ND 4.0 license \url{http://creativecommons.org/licenses/by-nc-nd/4.0/}. To appear in Discrete Mathematics, doi:10.1016/j.disc.2016.09.017.}
 }
\maketitle
\begin{abstract}
    For even $n$ we prove that the genus of the complete tripartite graph $K_{n,n,1}$ is $\lceil (n-1) (n-2)/4 \rceil$.
    This is the least number of bridges needed to build a complete $n$-way road interchange where changing lanes is not allowed.
    Both the theoretical result, and the surprising link to modelling road intersections are new. 
\end{abstract}

\section{Introduction}

We start with some basics of topological graph theory, for details and definitions that are not given here, see~\cite{grosstucker1987, moharthomassen2001}. 
Let $V(G)$ denote the set of vertices and $E(G)$ the set of edges of a graph $G$. 
The \emph{genus} of $G$, denoted $g(G)$, is the minimum number $g$ such that $G$ can be embedded on the orientable surface
$\mathbb{S}_g$, the sphere with  $g$ handles. If $G$ has genus $g$, there is a \emph{2-cell (cellular) embedding} of $G$ into $\mathbb{S}_g$, that is, an embedding where the interior of each region (face) is homeomorphic to an open disk.
A \emph{rotation system} of $G$ is a set $\{\pi_v: v\in V(G)\}$ where $\pi_v$ is a cyclic permutation of edges incident to $v$, called a \emph{rotation} at $v$. There is a well-known correspondence between orientable 2-cell embeddings and rotation systems: the rotation $\pi_v$ corresponds to the clockwise ordering of edges emanating from $v$ in the embedding.

Ringel \cite{ringel1965} showed that for any positive integers $n$ and $m$ the genus of the complete bipartite graph $K_{n,m}$ is $L(n,m)$, where $L(n,m)=\left \lceil \frac {(n-2)(m-2)} 4 \right \rceil$. An alternative proof was given by Bouchet~\cite{bouchet1978}. The inequality $g(K_{n,m}) \ge L(n,m)$ follows easily by Euler's formula $2 - 2g = v + f - e$, which holds for any 2-cell embedding of a graph with $v$ vertices, $e$ edges and $f$ faces into $\mathbb{S}_g$  
\cite{grosstucker1987}.


White \cite{white1965} conjectured in 1965 that the genus of a complete tripartite graph $K_{n,r,s}$ with $n\ge r \ge s$, satisfies
\begin{equation}\label{conj.white}
    g(K_{n,r,s}) = L(n,r+s) =  \left \lceil \frac {(n-2) (r + s - 2)} 4  \right \rceil.
\end{equation}
Since $K_{n,r+s}$ is a subgraph of $K_{n,r,s}$, we know that $g(K_{n,r,s})$ must be at least $L(n,r+s)$, see, for example, \cite{stahlwhite1976}. The challenge is to construct embeddings of such genus.

White's conjecture has been confirmed for complete tripartite graphs
with even part sizes \cite{harsfieldringel1990}, the graphs $K_{n, r, r}$, $n \ge r \ge 2$ and several other classes,  
see \cite{craft1998, ksz2004, stahlwhite1976}. However, it remains open in general. A corresponding conjecture for non-orientable embeddings
has been settled for all complete tripartite graphs by Ellingham, Stephens and Zha \cite{esz2006}. 



We prove
\begin{theorem}\label{thm.main}
    For any even positive integer $n$ there is a 2-cell embedding of $K_{n,n}$ 
    into $\mathbb{S}_{\lceil (n-1) (n-2)/4 \rceil}$ 
    which has a face bounded by a Hamiltonian cycle.
 \end{theorem}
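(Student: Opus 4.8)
The statement (Theorem~\ref{thm.main}) is purely a construction: we only have to exhibit one embedding with the two required features, and the value $\lceil (n-1)(n-2)/4\rceil$ is not something to be optimised but simply what Euler's formula returns once the face structure is fixed. So the first step is to pin down that face structure. For a $2$-cell embedding of $K_{n,n}$ into $\mathbb{S}_g$ with $g=\lceil (n-1)(n-2)/4\rceil$, Euler's formula gives $f = e-v+2-2g = n^2-2n+2-2g$ faces, whose lengths sum to $2e = 2n^2$. One of them is the Hamiltonian cycle, of length $2n$; subtracting, the remaining $f-1$ faces have lengths summing to $2n^2-2n$. A short computation shows that when $n\equiv 2\pmod 4$ this equals exactly $4(f-1)$, so \emph{every} other face must be a quadrilateral; when $n\equiv 0\pmod 4$ the number $(n-1)(n-2)/4$ is not an integer, and the ceiling adds exactly enough room for one octagonal face to take the place of two quadrilaterals — so there every face other than the Hamiltonian one is a quadrilateral except a single octagon. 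The goal thus becomes: build a rotation system of $K_{n,n}$ whose faces are one Hamiltonian $2n$-gon together with quadrilaterals only (save the one octagon when $4\mid n$).

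For the construction I would write down an explicit rotation system, pushing cyclic symmetry as far as it will go. Index the two parts by $\mathbb{Z}_n$, write the target Hamiltonian face as $H = a_0 b_0 a_1 b_1 \cdots a_{n-1} b_{n-1} a_0$, and observe that $H$ is a face exactly when $b_{i-1}$ and $b_i$ are consecutive in $\pi_{a_i}$ and $a_i,a_{i+1}$ are consecutive in $\pi_{b_i}$ for every $i$ (and the orientations match). So one fixes a single cyclic permutation $\rho$ of the $b$'s and a single $\sigma$ of the $a$'s compatible with this, sets $\pi_{a_i}$ to be the $i$-shift of $\rho$ and $\pi_{b_i}$ the $i$-shift of $\sigma$, and then tailors the internal order of $\rho,\sigma$ so that every face other than $H$ closes up after four steps. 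Equivalently, one may encode the scheme as a current graph over $\mathbb{Z}_n$ (or $\mathbb{Z}_{2n}$) in which a single distinguished vertex of valence $2n$ generates $H$ while all other vertices have valence $4$ and generate the quadrilaterals — the ``vortex'' variant of the current-graph machinery behind Ringel's genus formulas \cite{ringel1965}. I would settle the small cases $n=2,4,6,8$ by hand, read off the pattern, and then present the general families, handling $n\equiv 2\pmod 4$ and $n\equiv 0\pmod 4$ separately (the latter carrying the octagon). Two alternatives worth trying if the symmetric scheme is awkward: start from Ringel's quadrangular embedding of $K_{n,n}$ \cite{ringel1965} and add about $(n-2)/4$ handles by a controlled sequence of elementary rotation changes that progressively ``open up'' the cycle $H$ into a face; or run an induction $K_{n,n}\to K_{n+2,n+2}$ that inserts the $4n+4$ new edges and about $n$ new handles while threading the enlarged Hamiltonian cycle through the four new vertices.

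The crux is meeting both demands at once: the exceptional face must be a genuine Hamiltonian \emph{cycle} — simple and spanning — rather than merely a closed walk of length $2n$ that revisits a vertex or splits into shorter cycles, while simultaneously not spoiling any quadrilateral. In the symmetric / current-graph picture this forces the currents at the distinguished vertex both to satisfy Kirchhoff's law and to be laid out so that the derived $2n$-gon is a single cycle through all $2n$ vertices (consecutive currents differing by a generator of the group), all without disturbing the valence-$4$ local pictures that give the $4$-faces; establishing that this is possible for every even $n$, together with the $n\equiv 0\pmod 4$ bookkeeping, is where the real combinatorial content and the casework lie. Everything downstream is routine: the Euler count above, and afterwards placing an apex vertex inside the face $H$ and invoking the subgraph bound $K_{n,n,1}\supseteq K_{n,n+1}$ together with Ringel's formula to conclude that $g(K_{n,n,1}) = \lceil (n-1)(n-2)/4\rceil$ for even $n$, confirming White's conjecture~(\ref{conj.white}) in these cases.
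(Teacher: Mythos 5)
Your reduction is sound as far as it goes: the Euler count correctly forces the target face vector (one Hamiltonian $2n$-gon plus quadrilaterals, with a single octagon absorbing the ceiling when $n\equiv 0\pmod 4$), and the local condition for $H$ to bound a face ($v-1$ and $v+1$ consecutive in $\pi_v$, in the right orientation) is exactly right. But everything after that is a research plan, not a proof. The theorem \emph{is} the construction, and you never produce one: you propose to ``tailor the internal order of $\rho,\sigma$'' or to find a current graph with a single valence-$2n$ vortex, explicitly flag that verifying this is possible for all even $n$ ``is where the real combinatorial content and the casework lie,'' and then stop. The index-shift/current-graph route you sketch is precisely the machinery that, per the paper's introduction, had only yielded this result for sparse special values of $n$ (via Hamiltonian-face embeddings of $K_{n+1}$), so it cannot be waved through as routine; the single exceptional octagon in the $n\equiv 0\pmod 4$ case in particular breaks the full cyclic symmetry your scheme relies on, and you give no mechanism for handling that. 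The alternative suggestions (handle-adding from Ringel's quadrangulation, or an inductive step $K_{n,n}\to K_{n+2,n+2}$) are likewise unexecuted.

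For contrast, the paper's construction avoids current and voltage graphs entirely. It partitions the edges of $H$ into four classes $P_i$ by residue of the endpoint mod $4$, applies Ringel's quadrangular embedding of $K_{n/2,n/2}$ (Lemma~\ref{lem.Knm}) to the vertex set of each class --- choosing the vertex orderings so that each edge of $P_i$ lies on a ``special'' quadrilateral face, with the special faces pairing up the edges of $P_i$ --- and then concatenates, at each vertex $v$, its two rotations from the two sub-embeddings containing it, cut at the special faces so that $\pi_v=(\dots,v-1,v+1,\dots)$. All non-special faces survive intact as quadrilaterals; the special faces dissolve and their arcs reassemble into $H$, a family of new quadrilaterals $F_{1,k},F_{2,k}$, and (for $n\equiv 0\pmod 4$) the one octagon $C_8$. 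The case $n\equiv 2\pmod 4$ needs an extra gluing of two of the four sub-embeddings along a shared quadrilateral before concatenation. Supplying an argument of this kind --- explicit rotations plus a complete face trace --- is what your proposal is missing.
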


This confirms (\ref{conj.white}) for even $n$ and $r$ with $n=r$ and $s=1$. Let $L(n) = \lceil (n-1) (n-2)/4 \rceil$.  For $n \ge r$ and $r$ even we can use the \textit{diamond sum} operation \cite{bouchet1978, ksz2004} to see that $g(K_{n,r,1}) \le L(n,r+1) + 1$ and (\ref{conj.white}) holds if either $r \bmod\,4 = 2$ or both $r \bmod\, 4 = 0$ and $n \bmod\,4 \in \{0,1\}$. The same technique easily yields $g(K_{n,n,1}) \le L(n) + 1$ for any odd~$n$.

To our knowledge,
prior to this work, $g(K_{n,n,1}) = L(n)$ and, implicitly, Theorem~\ref{thm.main}, has been proved 
only 
for an infinite sequence of $n$ of the form $3^q (2^p + \frac 1 2) + \frac 1 2$ where $q\ge0$ and $p\ge 3$ are integers.
This follows from the embeddings of $K_{n+1}$ where all faces are bounded by Hamiltonian cycles, constructed by Ellingham and his co-authors \cite{ellinghamstephens2009, ellinghamschroeder2014}.

The authors in \cite{esz2006} claimed a proof of (\ref{conj.white}) for a very general family of graphs $K_{n,r,s}$, including the cases studied in this paper. However, no proofs have appeared since the publication of \cite{esz2006} in 2006.
Our idea is simple, original and does not involve the traditional voltage graph \cite{ellinghamschroeder2014, grosstucker1987} or transition graph constructions \cite{ellinghamstephens2009, esz2006}.

We conjecture that Theorem~\ref{thm.main} also holds for any odd integer $n \ge 3$. The main result and the application described below leads to a more general question: when, among all minimum genus embeddings of a graph $G$ there is one with a Hamiltonian cycle bounding a face?

\section{Road interchanges}


Our work is motivated by a beautiful road junction optimisation problem, which, to our knowledge, is being described
here for the first time. 

\begin{figure}[t] 
    \begin{center}
    \begin{subfigure}{.45\textwidth}
           \centering
           \includegraphics[height=0.8\textwidth]{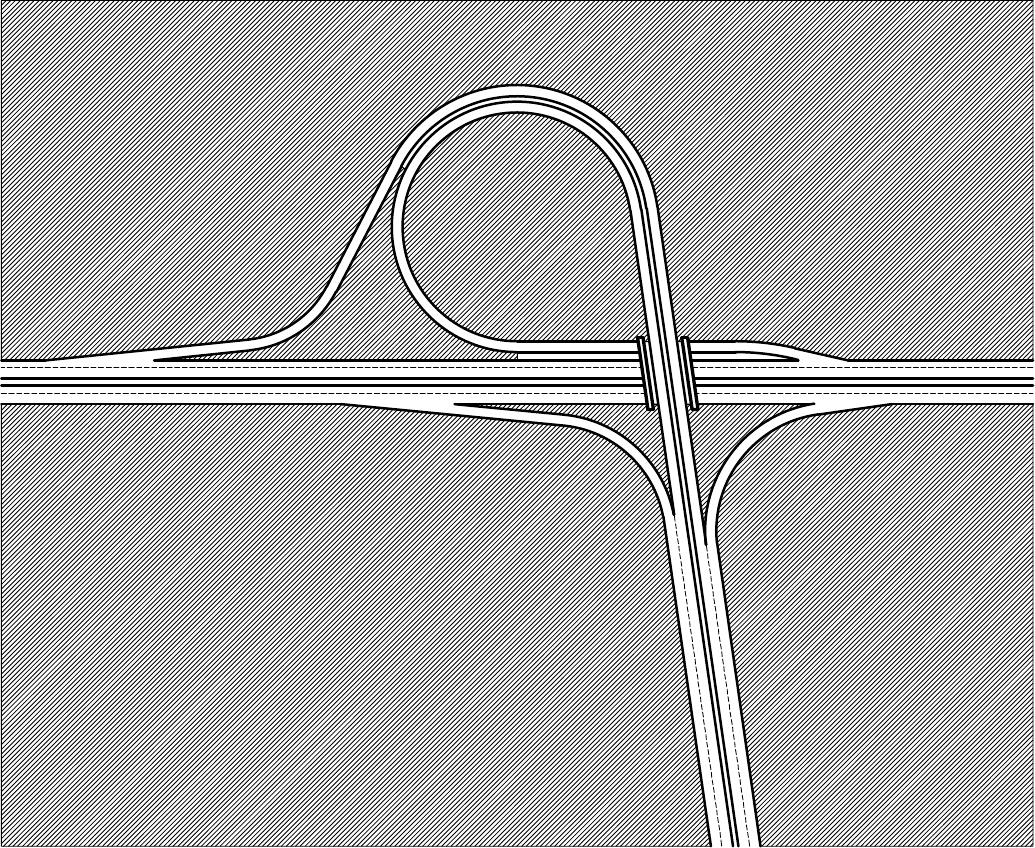}
           \caption{}
           \label{fig.trumpet}
        \end{subfigure}
        \begin{subfigure}{.45\textwidth}
           \centering
           \includegraphics[height=0.8\textwidth]{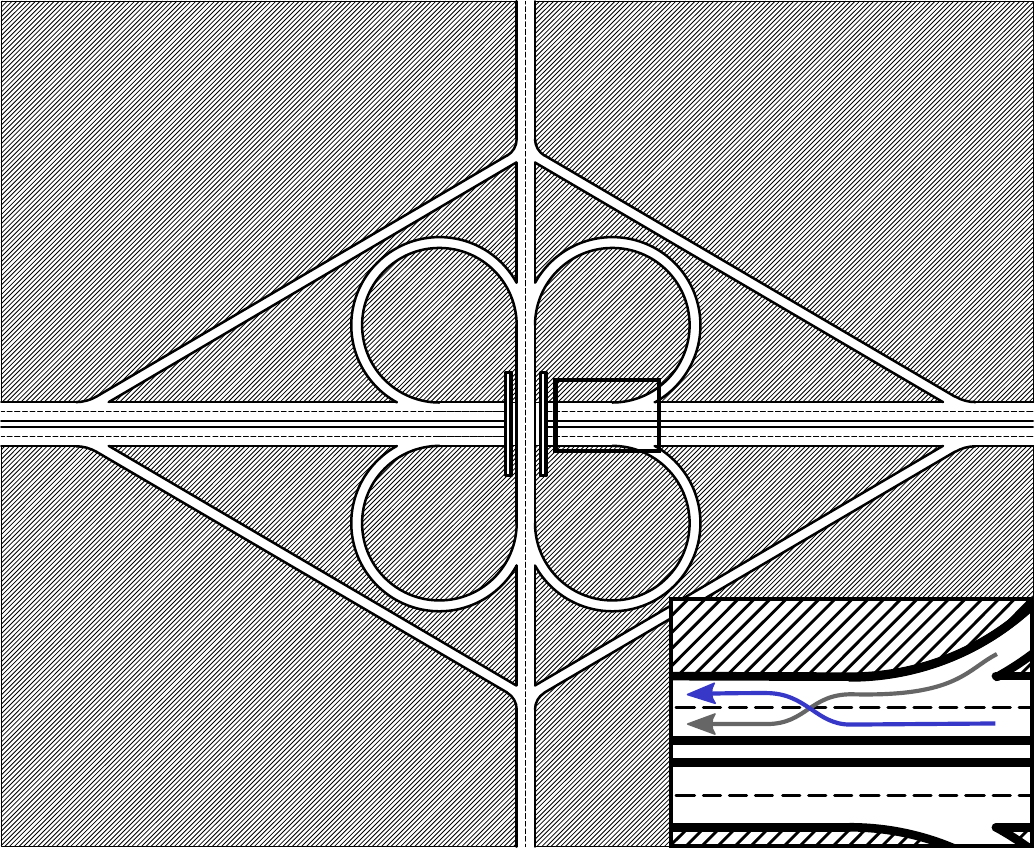}
           \caption{}
           \label{fig.cloverleaf}
        \end{subfigure}
        \begin{subfigure}{.45\textwidth}
           \centering
           \includegraphics[height=0.8\textwidth]{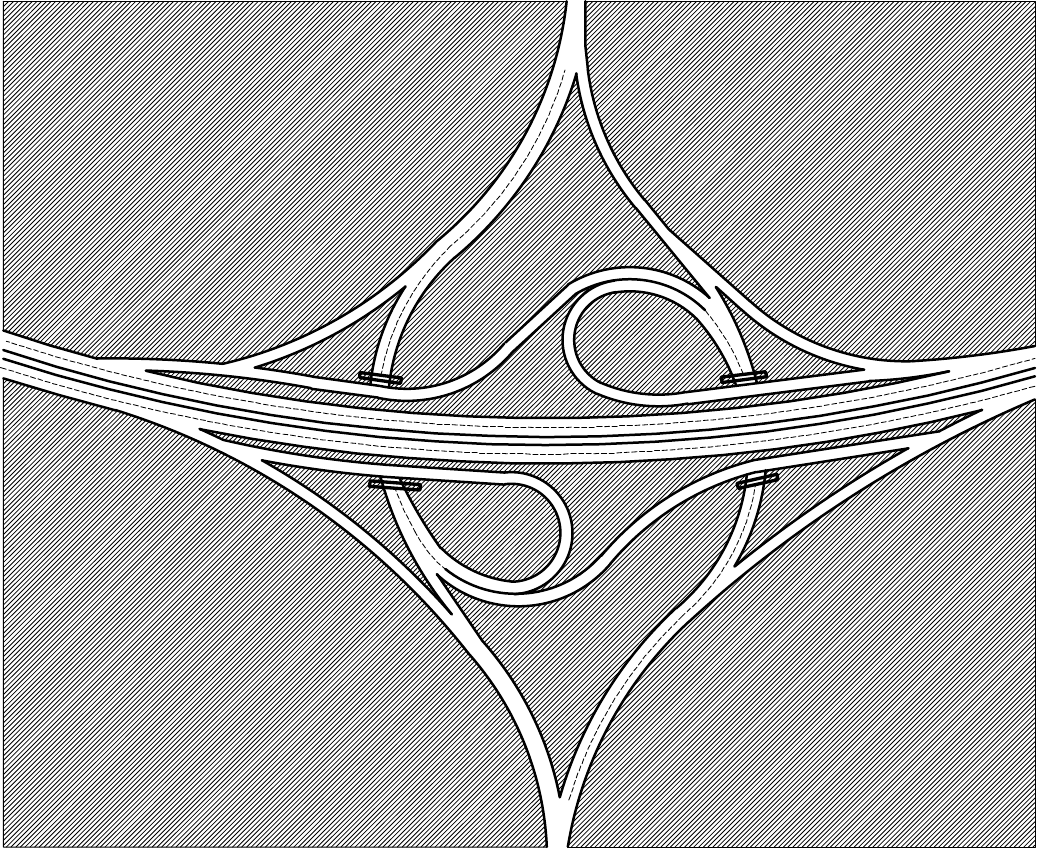}
           \caption{}
           \label{fig.d_trumpet}
        \end{subfigure}
        \begin{subfigure}{.45\textwidth}
           \centering
           \includegraphics[height=0.8\textwidth]{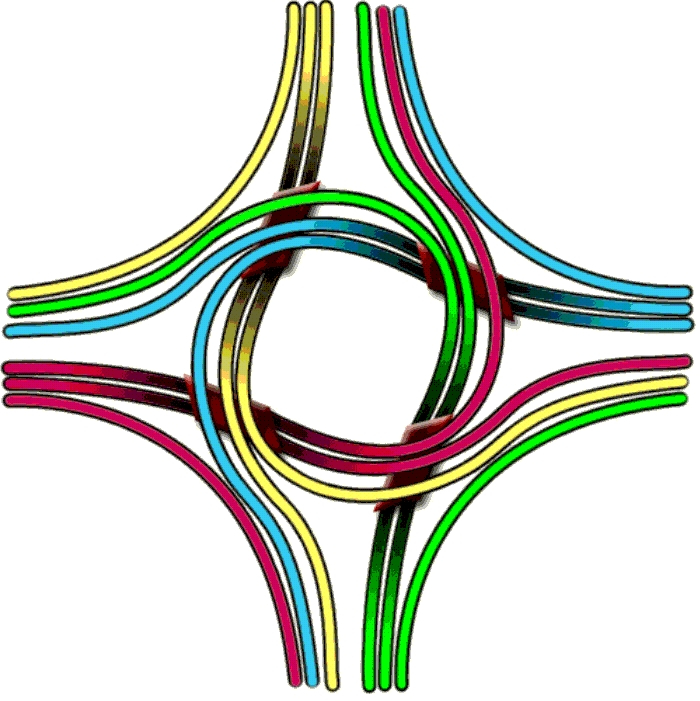}
           \caption{}
           \label{fig.pinavia}
        \end{subfigure}

    \end{center}
    \caption{Some junction types: (a) trumpet, (b) cloverleaf (an example of traffic weaving in the magnified rectangle), (c) double trumpet, (d) Pinavia \cite{jbk2010}.
    Images (a)-(c) from Davies and Jokiniemi \cite{daviesjokiniemi2008} used with authors' permission; image (d) from \textit{\url{www.pinavia.com}} used with authors' permission.}
        \label{fig.junctions}
    \end{figure}


Many types of road interchanges are known by engineers and built in practice, see, for example, Chapter~7 of \cite{garberhoel2009}. 
A popular design is the 4-way \textit{cloverleaf interchange}.
A 3-way example is the \textit{trumpet interchange}, see Figure~\ref{fig.junctions}.
To drive through certain junctions, some vehicles must cross each other's path and
change their lanes.
For example, drivers approaching a cloverleaf from the south and
going west, need a maneuver similar to the one depicted
in a corner of Figure~\ref{fig.cloverleaf} (assuming right-hand traffic). 
In engineering, a situation when 
a ``vehicle first merges into a stream of traffic,
obliquely crosses that stream, and then merges into a second stream moving in the same direction''
is called \textit{traffic weaving} \cite{garberhoel2009}.

Traffic weaving, which is generally undesirable, can be avoided in the trumpet, the \textit{all-directional four leg}~\cite{garberhoel2009}, also called \textit{four-level stack}, and, for example, recently invented \emph{Pinavia} interchanges (Figure~\ref{fig.pinavia}).
In these interchanges 
the lanes can be completely separated so that
the exit motorway and lane of a vehicle are determined by the lane it enters the junction (lane changes inside the junction are not necessary or not allowed). We call such interchanges \textit{weaving-free}. 


\hyphenation{Sko-pen-kov}
\hyphenation{Kra-sau-skas}

Let $n\ge 2$ be an integer. Based on an idea by Rimvydas Krasauskas (personal communication) and Mikhail Skopenkov, 
see also \cite{kotov1983}, we
propose to model a \emph{weaving-free $n$-way interchange} by a quadruple $(G, H, \mathcal{M}, \mathbb{S})$.
Here $G$ is a bipartite multigraph with $n$ white and $n$ black vertices as its parts, 
$H$ is a directed Hamiltonian cycle on which the vertex colours alternate,
$\mathbb{S}$ is a closed connected orientable surface and $\mathcal{M}$ is an embedding of $G$ into $\mathbb{S}$
such that $H$ bounds a \emph{face} (a region homeomorphic to an open disc). 
The $i$-th motorway is represented by one white vertex $a_i$ (the incoming direction) and one black vertex $b_i$ (the outgoing direction).
The cycle $H$ corresponds to the order the motorways enter and leave the junction. In particular, if traffic is right-hand and the clockwise order in which the motorways join the junction is $(1, \dots, n)$, then $H=(a_1, b_1, \dots, a_n,b_n)$.
Finally, the connections between the ingoing and outgoing lanes are represented by the remaining edges $uv \in E(G)$ such that $uv \not \in E(H)$.
For example, the trumpet interchange corresponds to the embedding shown in Figure~\ref{fig.trumpetgraphemba}.

\begin{figure}
    \begin{center}
    \begin{subfigure}{.3\textwidth}
           \centering
           \includegraphics[scale=0.2]{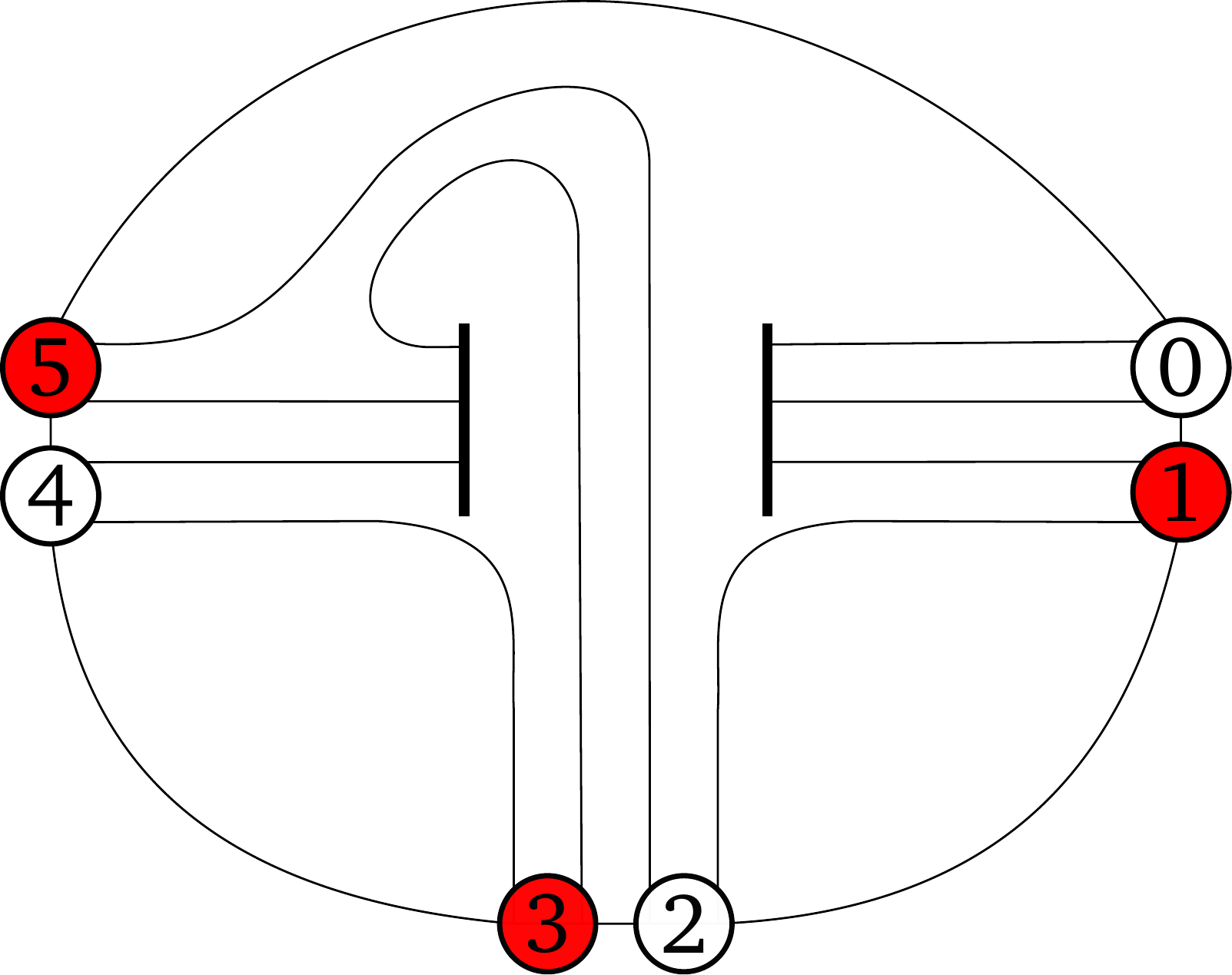}
           \caption{}
           \label{fig.trumpetgraphemba}
    \end{subfigure}
    \begin{subfigure}{.3\textwidth}
           \centering
           \includegraphics[scale=0.2]{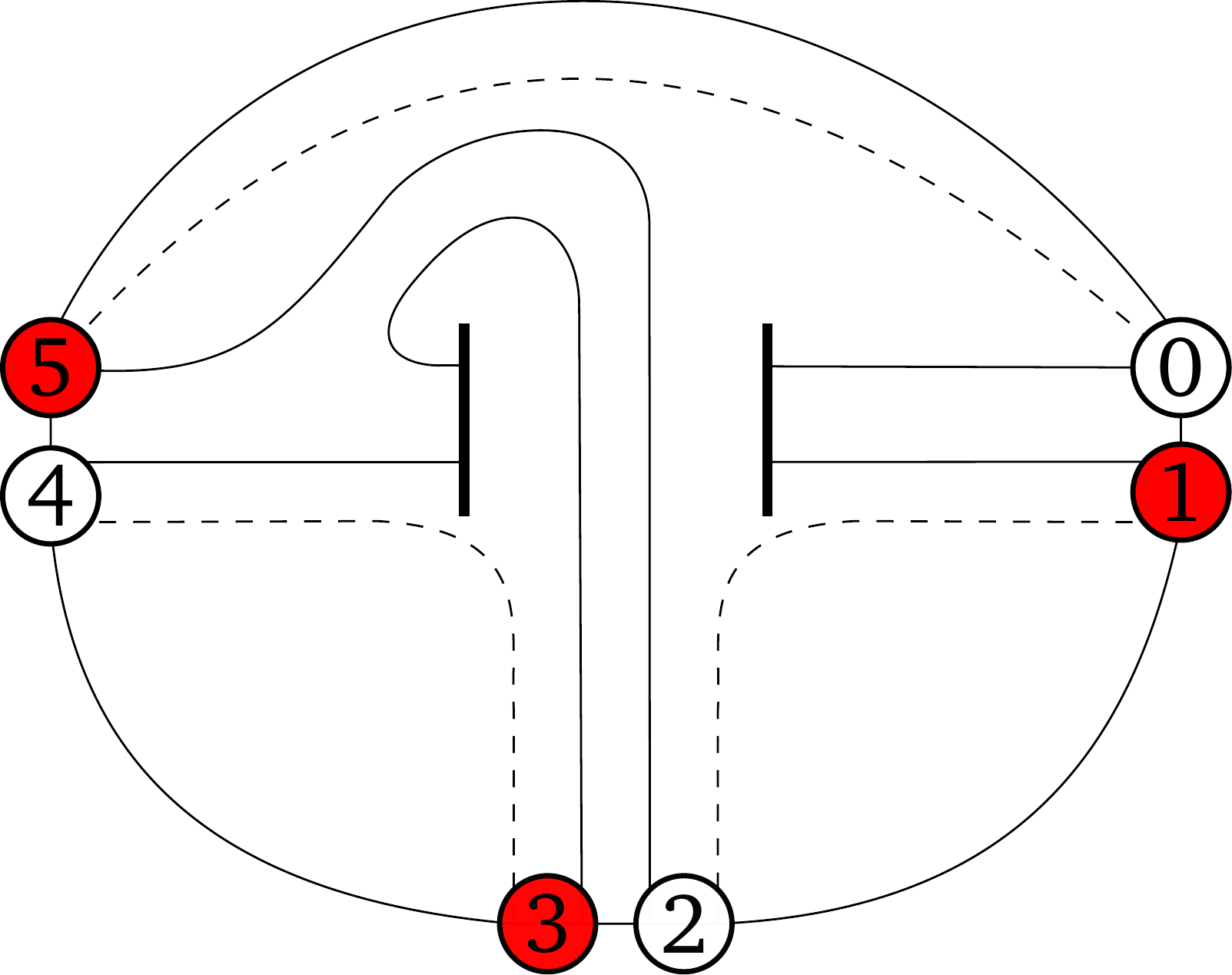}
           \caption{}
           \label{fig.trumpetgraphembb}
    \end{subfigure}

    \end{center}
        \caption{(a) Representation of the trumpet interchange by a multigraph embedded into a genus 1 surface; $H=(0,1,2,3,4,5)$. (b)
            A different complete interchange, an embedding of $K_{3,3}$, shown in solid lines.
            Lanes 
            connecting every pair of motorways 
            are obtained by duplicating appropriate edges of $H$ (dashed lines).
         }
        \label{fig.trumpetgraphemb}
\end{figure}


The \textit{number of bridges} in an interchange $I = (G, H, \mathcal{M}, \mathbb{S})$ is defined to be the genus of $\mathbb{S}$.
We call $I$ \textit{complete} if for each white vertex $u$ and each
black vertex $v$ there is an edge $uv \in E(G)$. 
If $I$ is complete we can iteratively remove repeated edges from $G$ which do not lie on $H$ to obtain
a complete interchange $I'=(G',H,\mathcal{M}, \mathbb{S})$ on the same surface with $G'$ a complete bipartite graph $K_{n,n}$, see Figure~\ref{fig.trumpetgraphembb}.
Similarly we can insert new lanes connecting arbitrary pairs of motorways, for example, by duplicating edges of $G$ without changing $\mathbb S$. Therefore we focus on complete interchanges with $G$ isomorphic to $K_{n,n}$ below.


Krasauskas was interested in the minimum number of bridges of a complete $n$-way weaving-free interchange, for a given $n\ge2$. 
We call the interchanges that achieve this minimum \textit{optimal} for $n$.
By the next lemma, Theorem~\ref{thm.main} provides a solution for all even $n$. 


\begin{lemma} \label{lem.interemb}
    Let $G$ be a complete bipartite graph $K_{n,n}$.
    An interchange $(G, H, \mathcal{M}, \mathbb{S})$ is optimal for $n$ if and only if
    $\mathcal{M}$ minimizes the genus over 2-cell embeddings of $G$ which have
    a face bounded by a Hamiltonian cycle.
   %
\end{lemma}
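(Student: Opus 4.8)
The plan is to unwind the definitions on both sides and show they describe the same minimisation problem. Recall that a weaving-free $n$-way interchange is a quadruple $(G,H,\mathcal{M},\mathbb{S})$ where $G$ here is fixed to be $K_{n,n}$, $H$ is a directed Hamiltonian cycle with alternating colours, and $\mathcal{M}$ is an embedding of $G$ into the orientable surface $\mathbb{S}$ with the property that $H$ bounds a face; the number of bridges is $g(\mathbb{S})$. So "optimal for $n$" means: among all such quadruples, $g(\mathbb{S})$ is minimised. I want to show this number equals $\min\{g(\mathcal{M}'): \mathcal{M}' \text{ is a 2-cell embedding of } K_{n,n} \text{ with a face bounded by a Hamiltonian cycle}\}$, and moreover that the embeddings achieving the two minima are the same.

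First I would argue the easy direction: any embedding $\mathcal{M}$ of $K_{n,n}$ that is 2-cell, of genus $g$, and has a face bounded by a Hamiltonian cycle $C$ immediately yields an interchange. Orient $C$ arbitrarily; since $K_{n,n}$ is bipartite and $C$ is a Hamiltonian cycle it automatically has alternating vertex colours, so take $H := C$ with that orientation. Then $(K_{n,n}, H, \mathcal{M}, \mathbb{S}_g)$ is a valid interchange with $g$ bridges. Hence the interchange minimum is at most the embedding minimum.

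For the reverse direction, start from an optimal interchange $(K_{n,n}, H, \mathcal{M}, \mathbb{S})$ with $g(\mathbb{S})$ minimum. The one subtlety is that the definition of an interchange only requires $\mathcal{M}$ to be an embedding into $\mathbb{S}$ with $H$ bounding a face — it does not a priori require $\mathcal{M}$ to be 2-cell, nor does it require $\mathbb{S}$ to have minimum genus over \emph{all} embeddings of $K_{n,n}$. I would handle this by a standard surgery argument: if some face of $\mathcal{M}$ other than the $H$-face fails to be a disc, one can cut along a non-contractible simple closed curve inside that face and cap off, producing an embedding of $K_{n,n}$ on a surface of strictly smaller genus in which the $H$-face is untouched and still bounded by $H$. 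This contradicts optimality. Hence an optimal interchange is automatically 2-cell, its genus is at least the embedding minimum, and combined with the first direction the two minima coincide; the surgery also shows the optimal embeddings are exactly the genus-minimal 2-cell embeddings with a Hamiltonian face. I expect this compression/surgery step — verifying that reducing genus in a non-cellular face can always be done without disturbing the distinguished $H$-face, and that it strictly decreases genus — to be the only real point needing care; everything else is bookkeeping with the definitions.
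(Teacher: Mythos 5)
Your proof is correct and follows essentially the same route as the paper: the easy direction is definitional (a Hamiltonian face of the bipartite $K_{n,n}$ automatically alternates colours), and the substance is showing that an optimal interchange must be a 2-cell embedding, by reducing the genus at a non-cellular face while leaving the $H$-face intact. The paper delegates exactly that reduction step to a cited result of Youngs (1963, paragraph 3.5) instead of carrying out the cut-and-cap surgery by hand, but the content is the same.
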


\begin{proof}
    Suppose $I$ is an optimal interchange for $n$. Let $I = (G,H,\mathcal{M}, \mathbb{S})$.
    It suffices to prove that $\mathcal{M}$ is 2-cell. Suppose the contrary.
    
    Youngs, see paragraph 3.5 of \cite{youngs1963}, showed, 
    that in this case the surface $\mathbb{S}$ can be transformed into a surface $\mathbb{S}'$,
    such that $\mathcal{M}$ is a 2-cell embedding, when viewed as an embedding from $G$ to $\mathbb{S}'$,
    all 2-cell faces of $\mathcal{M}$ are preserved (in particular, the face bounded by $H$)
    and the genus of $\mathbb{S}'$ is smaller than the genus of $\mathbb{S}$. This is a contradiction
    to the optimality of $I$.
\end{proof}

\section{Solutions for small $n$}

Given a graph $G$ with a rotation system $\{\pi_v: v \in V(G)\}$, the boundary circuit of each
face in the corresponding 2-cell embedding can be obtained by a simple \emph{face-tracing algorithm} \cite{grosstucker1987}. This algorithm repeats the following procedure until all edges are traversed (once in both directions).
Start with an arbitrary unvisited 
edge $e_0 = (v_0,v_1)$. Then for $i \ge 1$ if $e_i=(v_{i-1}, v_{i})$ define $e_{i+1} = \pi_{v_i}(e_i) = (v_i, v_{i+1})$, 
that is, at each vertex make a clockwise turn. Repeat this until $e_{t+1}=e_0$ for some $t > 0$, which closes a directed walk $(v_0, v_1, \dots, v_t)$. 

Once we know the number of faces, Euler's formula, yields the genus of the surface. 
Thus, we can find optimal interchanges for small $n$ by 
running the above algorithm 
for all rotation systems of $K_{n,n}$ with a face bounded by a fixed Hamiltonian cycle. The number of such rotation systems, $(n-2)!^{2n}$, grows with $n$ fast, the embeddings with minimal genus are very rare and exhaustive search is feasible only for $n\le 5$. 
Using two different methods we verified Theorem~\ref{thm.main} for all $n\in\{2,\dots, 11\}$. 
The first method was a randomized version of \cite{bouchet1978}, see also \cite{ksz2004, moharthomassen2001}.
The second one was restricting the exhaustive search to certain symmetry patterns; this yielded more symmetric solutions, especially for even $n$.
One of the two solutions for $n=4$, see Figure~\ref{fig.opt_4_0}, 
is known to engineers as the \textit{double trumpet} interchange. It has been built in practice and has $L(4) = 2$ bridges (Figure~\ref{fig.d_trumpet}).
The other solution, see Figure~\ref{fig.opt_4_1}, has the same rotation system as the four-level stack interchange. However, the construction 
used in practice
see, for example, \cite{garberhoel2009},
corresponds to a non-cellular embedding $\mathcal{M}$ and a surface $\mathbb{S}$ of genus larger than two.

\begin{figure}
    \begin{center}
    \begin{subfigure}{.3\textwidth}
           \centering
           \includegraphics[scale=0.2]{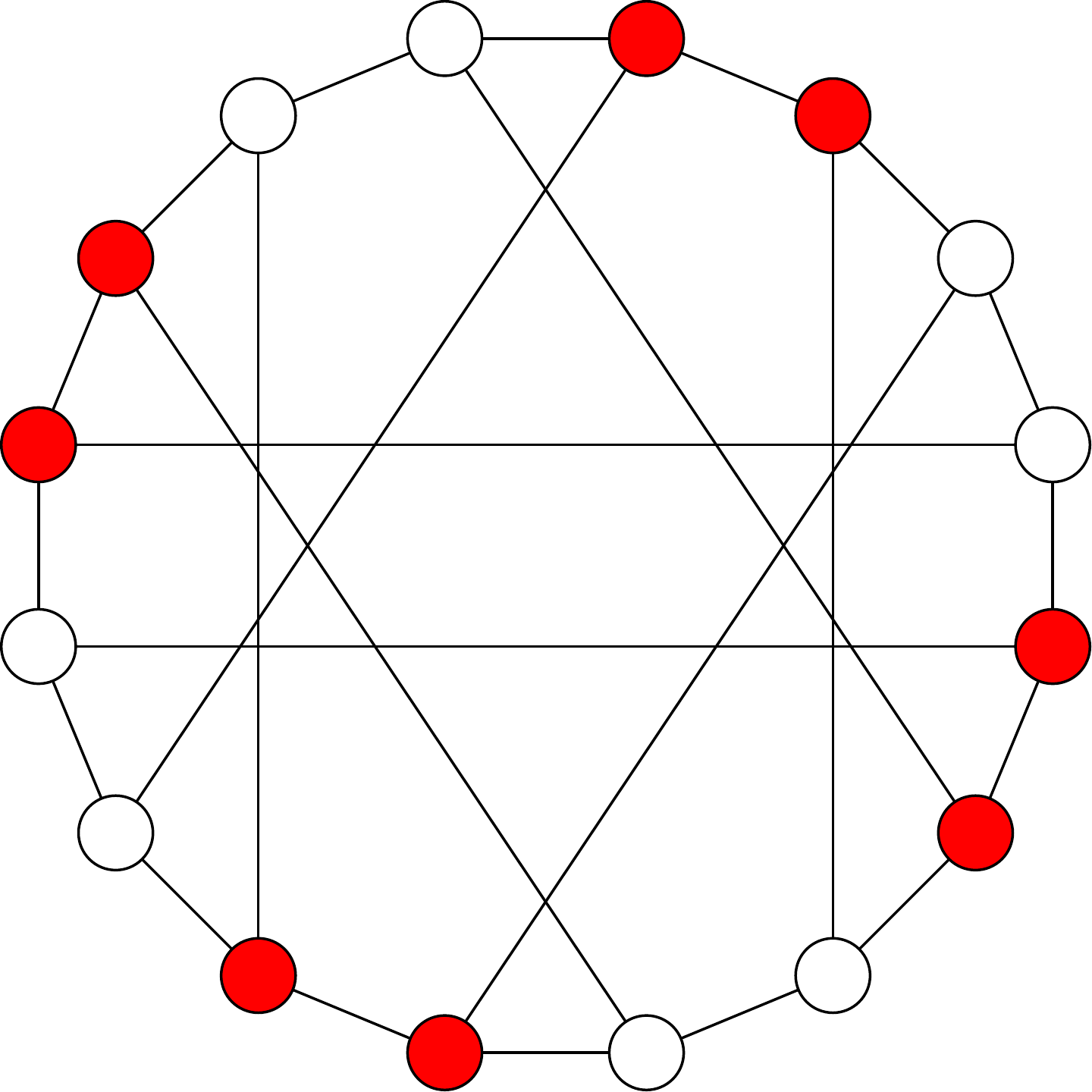}
           \caption{}
           \label{fig.opt_4_0}
    \end{subfigure}
    \begin{subfigure}{.3\textwidth}
           \centering
           \includegraphics[scale=0.2]{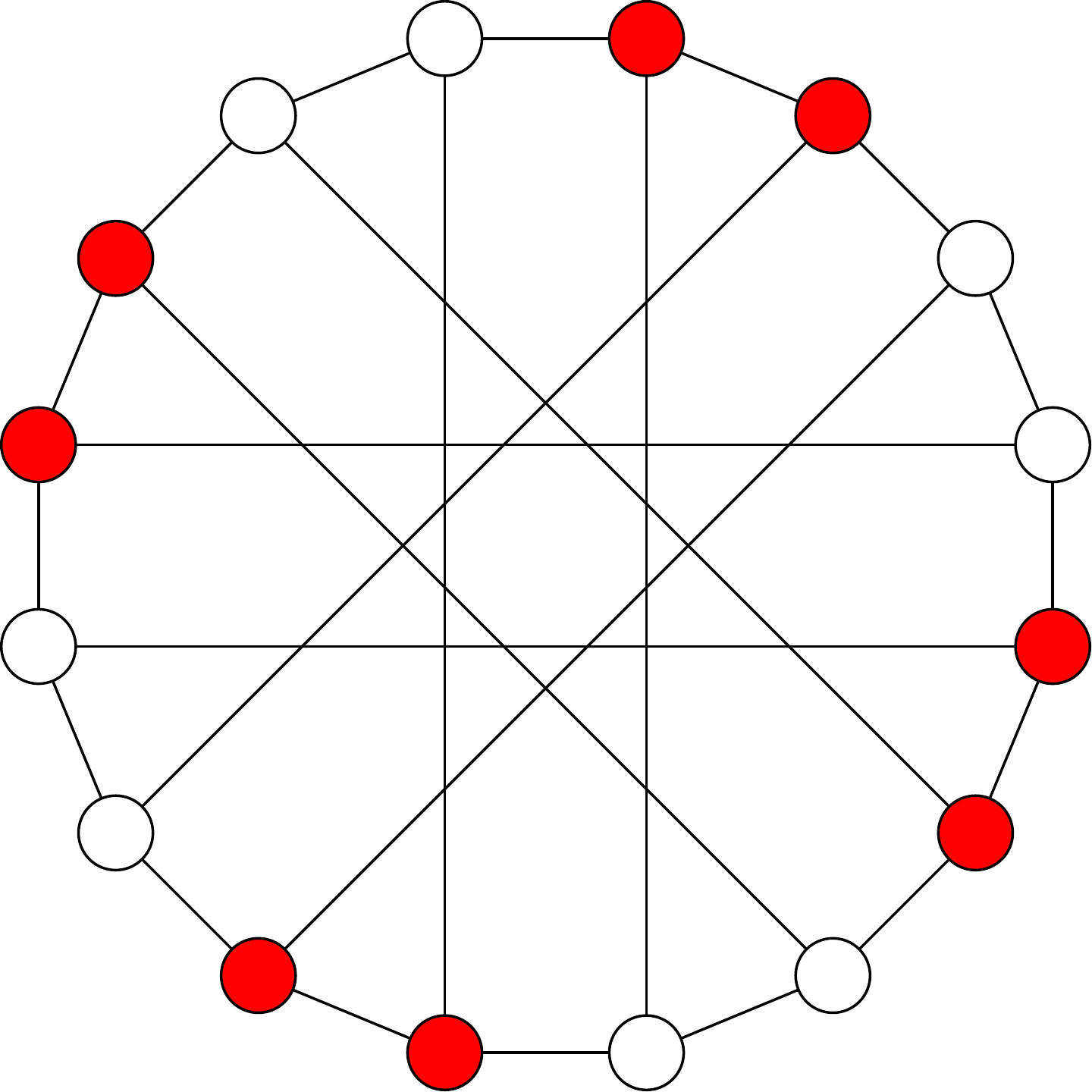}
           \caption{}
           \label{fig.opt_4_1}
    \end{subfigure}
    \begin{subfigure}{.3\textwidth}
           \centering
           \includegraphics[scale=0.2]{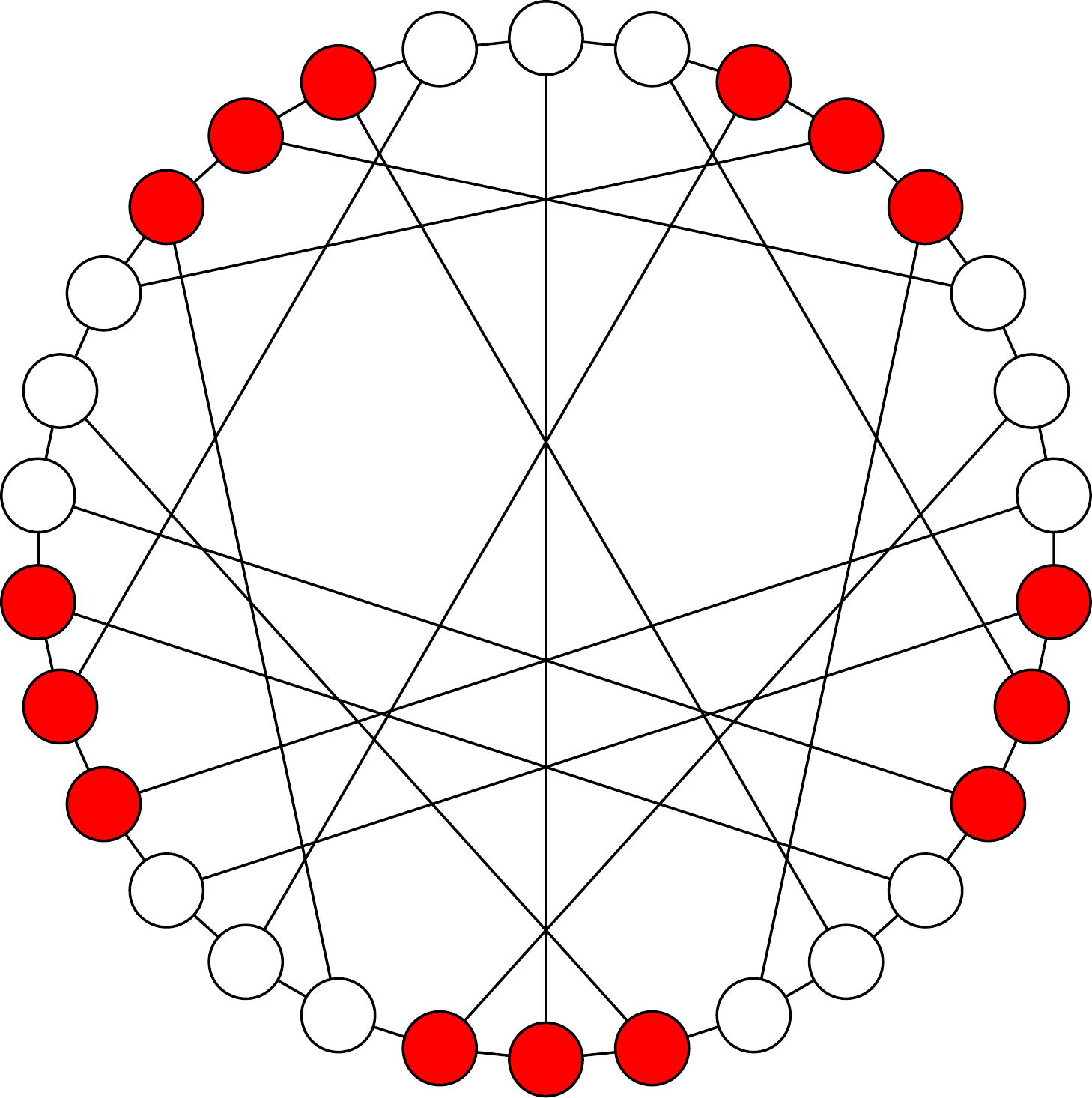}
           \caption{}
           \label{fig.opt_5_0}
    \end{subfigure}
    \end{center}
    \caption{The unique (up to isomorphism) optimal interchanges for $n=4$ and $n=5$ expanded to Hamiltonian cubic graphs.
             To get the rotation system for $K_{n,n}$, contract same colour paths on the outer circle.}
        \label{fig.opt_small}
\end{figure}

\section{Proofs}

In the proof we work only with 2-cell embeddings of simple graphs. Therefore we use terms \emph{embedding} and \emph{rotation system} synonymously and we use \emph{face} to refer to \emph{boundary circuit}. The genus of an embedding is the genus of the corresponding surface.
Also, we represent a rotation  $\pi_v$  as a cyclic permutation of vertices (rather than edges) incident to $v$  and use the notation $v: \pi_v$. For example, the rotation $v: xyz$ or $\pi_v = (x,y,z)$ represents 3 directed edges (arcs) emanating from $v$ in this clockwise order: $((v,x), (v,y), (v,z))$. 


We will need the next simple result. 
\begin{lemma}\label{lem.Knm} (Ringel \cite{ringel1965})
    Let $n$ and $m$ be even. The following rotation system gives a minimum genus embedding of $K_{n,m}$
    with parts $\{u_s: s \in \mathbb{Z}_m\}$ and $\{v_t: t \in \mathbb{Z}_n\}$:
    \begin{align*}
        &u_{2j}: v_{n-1} v_{n-2} \dots v_0;  
        \qquad u_{2j+1}: v_0 v_1 \dots v_{n-1}; \qquad j = 0, \dots, m/2 - 1;
        \\    &v_{2k}: u_0 u_1 \dots u_{m-1}; \qquad
        v_{2k+1}: u_{m-1} u_{m-2} \dots u_0; \qquad k = 0, \dots, n/2-1.
 \end{align*}
\end{lemma}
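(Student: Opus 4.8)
The plan is to compute the embedding $\mathcal{M}$ determined by this rotation system directly, using the face-tracing algorithm recalled above, show that \emph{every} face of $\mathcal{M}$ is a quadrilateral, and then read off the genus from Euler's formula.

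As preparation I would rewrite the rotations as permutations of neighbours. With indices on the $v$-side read modulo $n$ and those on the $u$-side read modulo $m$, the given system says $\pi_{u_s}(v_t)=v_{t-1}$ for even $s$, $\pi_{u_s}(v_t)=v_{t+1}$ for odd $s$, $\pi_{v_t}(u_s)=u_{s+1}$ for even $t$, and $\pi_{v_t}(u_s)=u_{s-1}$ for odd $t$. Because $K_{n,m}$ is bipartite every face boundary walk alternates between the two parts, so it suffices to follow the walk starting from an arbitrary arc $(u_s,v_t)$ directed from the $u$-side to the $v$-side, and there are only four cases, according to the parities of $s$ and $t$. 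I would check each of them. For example, when $t$ is even and $s$ is odd the walk reads $u_s\to v_t\to u_{s+1}\to v_{t-1}\to u_s$, closing after four arcs, so the face is the $4$-cycle $u_sv_tu_{s+1}v_{t-1}$; the other three parities are handled identically and give the $4$-cycles $u_sv_tu_{s+1}v_{t+1}$, $u_sv_tu_{s-1}v_{t+1}$ and $u_sv_tu_{s-1}v_{t-1}$. Hence every face of $\mathcal{M}$ is a quadrilateral; the small cases $n=2$ or $m=2$ require no special treatment, since these $4$-cycles still have four distinct vertices.

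The face-tracing algorithm partitions the $2nm$ arcs of $K_{n,m}$ into face boundary walks, each of which, we have just seen, consists of four arcs; therefore $\mathcal{M}$ has $f=2nm/4=nm/2$ faces. Any rotation system yields a $2$-cell embedding and $K_{n,m}$ is connected, so Euler's formula $v-e+f=2-2g$ applies with $v=n+m$ and $e=nm$, giving
\[
  g=\frac{e-v-f+2}{2}=\frac{nm-(n+m)-nm/2+2}{2}=\frac{(n-2)(m-2)}{4}=L(n,m).
\]
Combined with the lower bound $g(K_{n,m})\ge L(n,m)$ recalled in the introduction, this shows $\mathcal{M}$ is a minimum genus embedding, as claimed. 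I do not expect a genuine obstacle here: the only point needing a little care is the routine index bookkeeping in the four cases of the face trace (and, if one is fussy, noting that the conclusion is unaffected by whether one uses $\pi_v$ or $\pi_v^{-1}$ in the tracing convention, since the two choices give mirror-image embeddings of the same genus).
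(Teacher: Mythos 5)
Your proposal is correct and follows essentially the same route as the paper: both identify that every face of the given rotation system is a quadrilateral and then apply Euler's formula together with the standard lower bound $g(K_{n,m})\ge L(n,m)$. You merely carry out the face-tracing in more explicit detail (and correctly use $v=n+m$ in Euler's formula, where the paper's displayed count ``$v=2n$'' is a slip that only matters when $n\ne m$).
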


\begin{proof}
    The faces of the proposed embedding are
    \begin{align*}
        \{v_t u_{s+1} v_{t+1} u_s: s \mbox{ even}, t \mbox{ even}\} \cup  \{u_s v_{t+1} u_{s+1} v_t: s \mbox{ odd}, t \mbox{ odd}\}.
    \end{align*}
    This is a minimum genus embedding of $K_{n,m}$ because all faces are of length 4, which is minimum possible for a bipartite graph. Indeed, there are $v = 2n$ vertices, $e = n m$ edges and $f = \frac {nm}2$ faces in total. By Euler's formula the genus $g$ satisfies $2 - 2g = v + f - e$, or $g = (n-2) (m-2) / 4 = L(n,m)$.
\end{proof}

\medskip

The corresponding surface can be visualised as a book with $\frac n 2$ thick leaves, where each leaf has $\frac m 2 - 1$ arcs, see Figure~\ref{fig.arcs}. For $k \in \{0, \dots, \frac n 2 - 1\}$, glue faces $2k$ and $2k+1$ (bounded by the black edges in the figure) along the alternating edges $u_{m-1} u_{m-2}, u_{m-3} u_{m-4},$ $\dots,$ $u_1 u_0$. Also glue faces $2k$ and $(2k - 1) \bmod (2n)$ along the remaining edges $u_{m-2} u_{m-3},$ $u_{m-4} u_{m-5},$ $\dots,$ $u_2 u_1$, forming a ``leaf''.   Each of the $n$ faces is bounded by a Hamiltonian cycle. Now, as in Bouchet \cite{bouchet1978}, the embedding of Lemma~\ref{lem.Knm} results by placing a vertex $v_j$ inside each face $j$ and connecting it to each vertex $u_i$ (grey edges in the picture).

\begin{figure}
    \centering
    \includegraphics[scale=0.5]{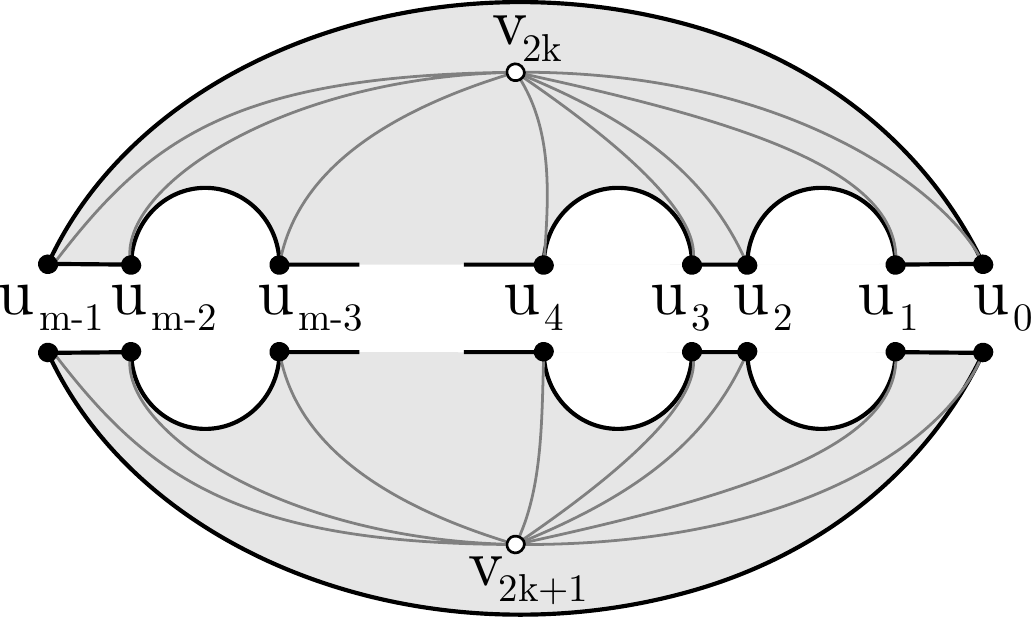}    
    \caption{The surface of Lemma~\ref{lem.Knm} is a ``book'' with arched leaves.}
    \label{fig.arcs}
\end{figure}

Note that for $m=n$ the embedding of Lemma~\ref{lem.Knm} contains a family
\begin{equation} \label{eq.cylinder}
    \mathcal{F}(n) = \left\{v_s u_{s+1}v_{s+1} u_s: s \mbox { even} \right\}
\end{equation}
of $n/2$ disjoint faces that covers all of the vertices. 
\begin{equation} \label{eq.cylinderprime}
    \mathcal{F}'(n) = \left\{u_s v_{s+1}u_{s+1} v_s: s \mbox { odd} \right\}
\end{equation}
is another such family. The union of $\mathcal{F}(n)$ and $\mathcal{F}'(n)$ makes up a cylindrical band of rectangles glued along their opposite sides, see Figure~\ref{fig.circle8a}. Below, given an embedding of $K_{n,n}$ as in Lemma~\ref{lem.Knm}, we call faces in $\mathcal{F}(n)$ \emph{special}.


We prove Theorem~\ref{thm.main} by combining 4 minimum genus embeddings of complete bipartite graphs with equal part sizes.
Our key insight comes after a careful analysis of symmetric
optimal 
$n$-way interchanges 
for $n=4$ and $n=8$ generated by computer. 

\bigskip

\begin{proofof}{Theorem~\ref{thm.main}.} 
    We aim to 
    construct 
    an
    embedding of a complete bipartite graph
    with parts
    $\{0, 2, \dots, 2n-2\}$ and $\{1,3, \dots, 2n-1\}$,
    such that one of the faces is the Hamiltonian cycle
    $(0,1,\dots,2n-1)$.
    We view the vertex set of the resulting graph as $\mathbb{Z}_{2n}$ and perform
    arithmetics in the proof modulo $2n$. 

   \emph{The case $n \equiv 0 \;(\bmod\; 4)$.}
   We start by partitioning the edges of $H$ into four parts $P_i$, $i \in \{0,1,2,3\}$, 
   where 
   \[
       P_i = \{ (t-1, t): t \equiv i \, (\bmod \,4),  t \in \mathbb{Z}_{2n} \}
   \]
   For each $i \in \{0, 1, 2, 3\}$ the pairs in $P_i$ cover $n/2$ even and $n/2$ odd vertices. 
   We denote these sets by $U_i = \{u: uv \in P_i\}$ and $V_i = \{v: uv \in P_i\}$.
   
   For each $i$ we apply Lemma~\ref{lem.Knm} to get an embedding $\mathcal{R}_i$ of $G_i$, where $G_i$ is $K_{n/2,n/2}$ with parts $U_i$ and $V_i$. To apply the lemma, it suffices to choose a permutation $\bar{u}_i=u_{i,0}u_{i,1}\dots u_{i, \frac n 2-1}$ of $U_i$ as  $u_0 u_1 \dots u_{\frac n 2 -1}$ and 
   a permutation $\bar{v}_i = v_{i,0}v_{i,1}\dots v_{i, \frac n 2-1}$ of $V_i$ as  $v_0 v_1 \dots v_{\frac n 2 -1}$. 
   We define these permutations by setting for $i\in\{0,1,2,3\}$ and $k \in \{0,\dots, \frac n 4 -1\}$
   \begin{equation}\label{eq.sigmadef}
       (v_{i,2k}, u_{i,2k+1}, v_{i,2k+1}, u_{i,2k}) = C_{i,k}.
   \end{equation}
   Here for $k \ne 0$
   \begin{align*}
     & C_{i,k} = 
       \left\{
\begin{array}{l l}     
    (4k-1, 4k, 2n-4k-1,2n-4k) &  i=0; \\
    (4k, 4k+1, 2n-4k,2n-4k+1) &  i=1; \\
    (4k+1, 4k+2,2n-4k+1,2n-4k+2) &  i=2; \\
    (4k+2, 4k+3, 2n-4k-2,2n-4k-1) &  i=3; \\
\end{array}\right.
   \end{align*}
   and for $k=0$
\begin{align*}
    &C_{i,k} =
       \left\{
   \begin{array}{l l}     
       (i-1, i, n+i-1, n+i) &  i \in \{0,1,2\}; \\
    (2, 3, 2n-2, 2n-1) &  i=3. \\
    \end{array}\right.
 \end{align*}
 Let $\mathcal{F}_i = \{C_{i,k}: k \in \{0, \dots, \frac n 4 - 1\}\}$. Note that $\mathcal{F}_i$ is the family $\mathcal{F}(n/2)$ of special faces of $\mathcal{R}_i$ defined in (\ref{eq.cylinder}). 
 Importantly, each special face of $\mathcal{F}_i$ is made of exactly two arcs in $P_i$. The family $\mathcal{F}_i$ can alternatively
 be seen as a matching of the elements of $P_i$.

   Let us now describe how  $\mathcal{R}_0, \dots, \mathcal{R}_3$ are combined into an embedding of $K_{n,n}$.
   By the definition of $G_i$, any vertex $v \in \mathbb{Z}_{2n}$ belongs to exactly two of the four graphs, $G_{v \bmod 4}$ and $G_{(v+1) \bmod 4}$.
   Let $y_v = (y_{v,0}y_{v,1}\dots y_{v,\frac n 2 -1})$ and $z_v = (z_{v,0}z_{v,1}\dots z_{v,\frac n 2 - 1})$ be
   the rotations at $v$ in their respective embeddings. Without loss of generality we can assume that $y_{v,\frac n 2 - 1}=v-1$ and $z_{v,0} = v+1$. Note that 
the sets of elements of $y_v$ and $z_v$ are disjoint and partition the odd (even) vertices
of $\mathbb{Z}_{2n}$ if $v$ is even (odd). 

\begin{figure} 
    \begin{subfigure}[b]{.5\textwidth}
           \centering
           \includegraphics[scale=0.45]{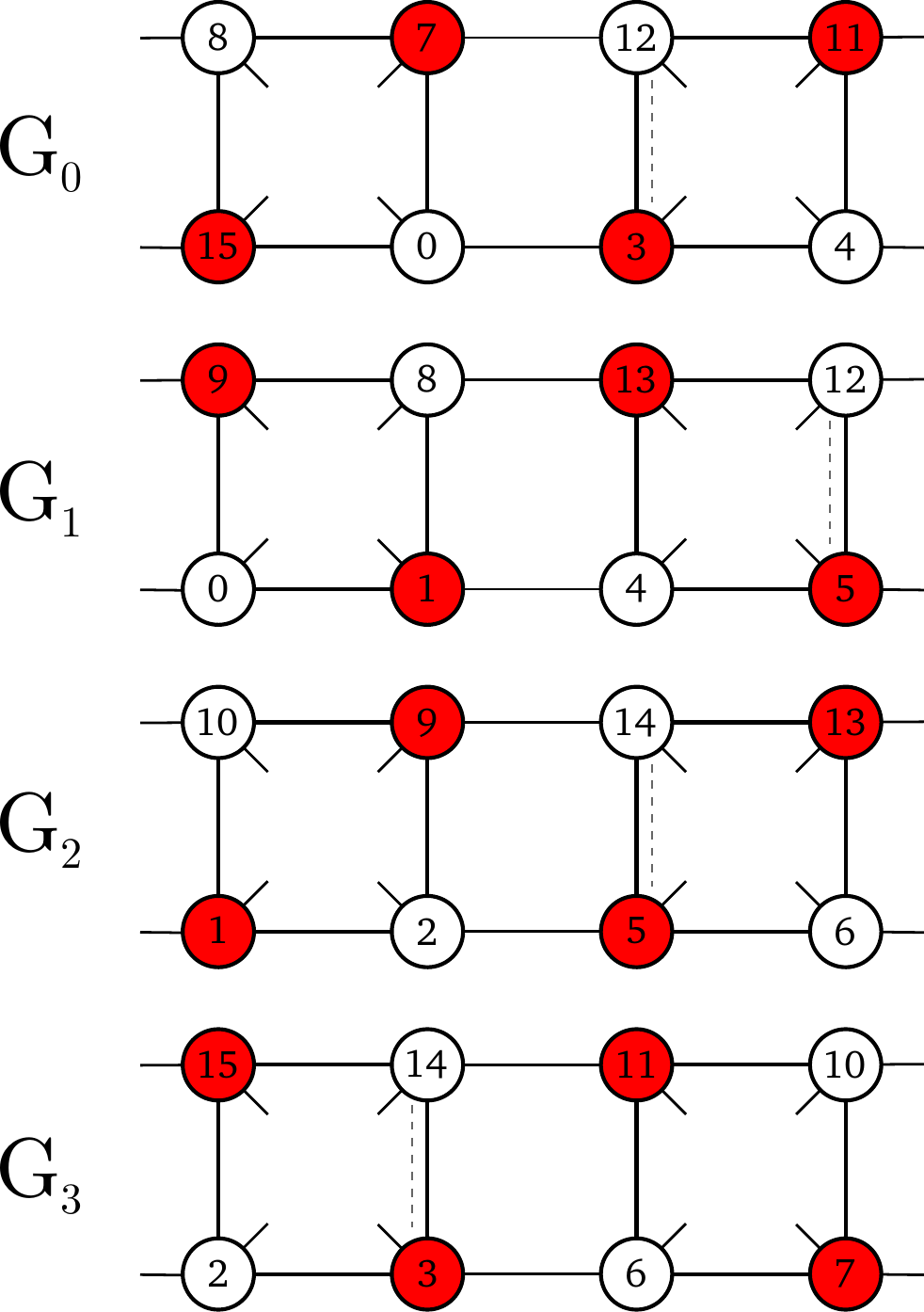}
           \caption{}
           \label{fig.circle8a}
        \end{subfigure}
        \begin{subfigure}[b]{.5\textwidth}
           \centering
           \raisebox{0.4 cm}{\includegraphics[scale=0.45]{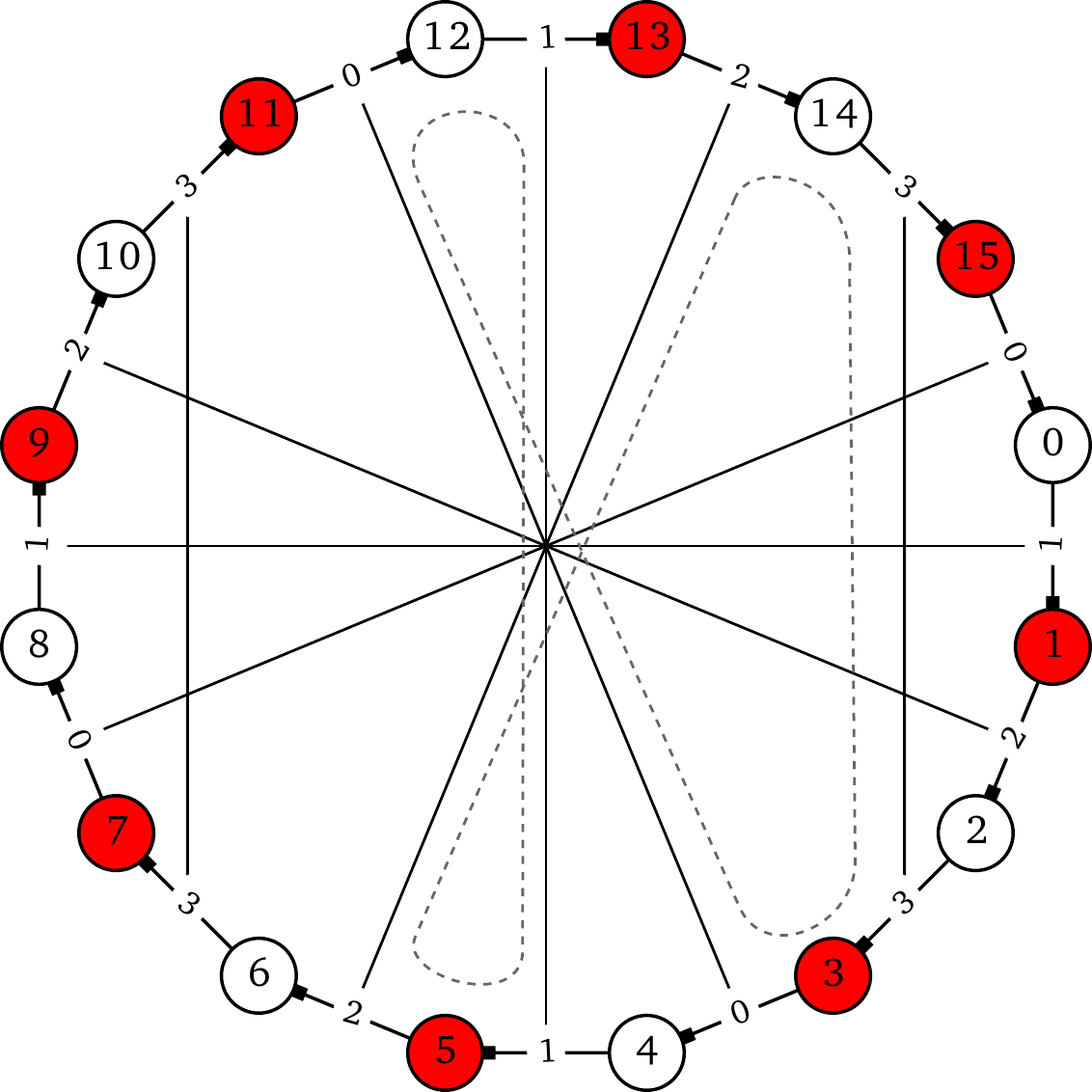}}
           \caption{}
           \label{fig.circle8b}
        \end{subfigure}
        \caption{The case $n=8$. (a) In $\mathcal{R}_i$, 
            the arcs of $P_i$ are paired to make up the 
            $\frac n 4$ disjoint special faces. The marks in their corners show
            where the rotations are cut in the concatenation step. 
            (b) The matchings of $P_i$, $i \in \{0,1,2,3\}$. The arc label $i$ indicates the subgraph $G_i$. The dashed cycle shows one of the new faces, $F_{1,1} = (3,14,5,12)$.}
    \label{fig.circle8}
\end{figure}

The rotation system $\mathcal{R} = \{\pi_v: v \in \mathbb{Z}_{2n}\}$ where
\begin{equation}\label{eq.rX}
 \pi_v = (y_{v,0}, \dots, y_{v, \frac n 2-1}, z_{v,0}, \dots, z_{v,\frac n 2-1}) = (\dots, v-1,v+1, \dots) 
\end{equation}
defines an embedding 
of $K_{n,n}$.
Notice the special role of vertices $y_{v,0}$, $y_{v, \frac n 2 - 1}=v-1$, $z_{v,0}=v+1$ and $z_{v,\frac n 2 -1}$: the embedding $\mathcal{R}_{v \bmod 4}$ contains a special face $(v-1, v, y_{v,0}, y_{v,0}+1)$.
Similarly $\mathcal{R}_{ (v+1) \bmod 4}$ contains a special face  $(v, v+1, z_{v,\frac n 2-1}-1, z_{v,\frac n 2-1})$.

What are the faces of $\mathcal{R}$? 
Note that if $\mathcal{R}_i$ has a rotation $a: (\dots, b, c, \dots)$ where $(b,a)$ and $(a,c)$ are not both
directed edges of a special face, then $\mathcal{R}$ also has a rotation $a: (\dots, b,c, \dots)$. Thus,
if $(a,b,c,d)$ is a non-special face of $\mathcal{R}_i$ then it is also a face of $\mathcal{R}$.
It follows by (\ref{eq.rX}) that the new faces of $\mathcal{R}$
are formed only from 
the directed edges of the cycles in $\cup_i \mathcal{F}_i$, see Figure~\ref{fig.circle8a}.

The right side of (\ref{eq.rX}) implies that $H$ is one of the new faces. 
Also, by (\ref{eq.sigmadef}) and (\ref{eq.rX}) we get that for each $k \in \{1, \dots, \frac n 4 - 1\}$ the embedding $\mathcal{R}$ contains a face 
\begin{align*}
    F_{1,k}  &= (4k-1, 2n-4k+2, 4k+1, 2n-4k)   
    \intertext{and a face}
    F_{2,k} &= (4k, 2n - 4k - 1, 4k+2, 2n - 4k + 1).
\end{align*}
Finally, the arcs of $C_{i,0}$ for $i \in \{0,1,2\}$ not lying on $H$, together with the unused edges 
$(2n-1, 2)$ and $(n-1, n+2)$ of $C_{3, 0}$ and $C_{3, \frac n 4 - 1}$ respectively 
yield 
a cycle $(0, n-1, n+2, 1, n, 2n-1, 2, n+1)$, which we denote $C_8$. 

Write $\mathcal{F} = \{F_{1,k}:  k \in \{1, \dots, \frac n 4 -1\} \} \cup  \{F_{2,k}:  k \in \{1, \dots, \frac n 4 -1\} \}$
and note that the arcs of the cycles in $\{H, C_8\} \cup \mathcal{F}$ cover all edges of $\cup_i \mathcal{F}_i$.
Thus, the lengths of face boundaries of $\mathcal{R}$ are $2n, 8, 4, 4, \dots, 4$,
the number of faces is $2 + \frac {2 n^2 - 2n - 8} 4 = \frac {n (n-1)} 2$
and by Euler's formula, the genus is $\frac {n^2 - 3n + 4} 4 = L(n)$.

Interestingly, all the faces in $\{H, C_8\} \cup \mathcal{F}$ 
can be obtained by a face-tracing algorithm on a graph formed 
by placing $H$ on a circle and
connecting midpoints of the the elements of $P_i$ that are matched (i.e. lying on the same special face in $\mathcal{F}_i$) by a chord, see Figure~\ref{fig.circle8b}.

\emph{The case $n \equiv 2 \;(\bmod\; 4)$}.
For $i \in \{0,1,2,3\}$ set
    \begin{align*}
        \tilde{P}_i = &\{ (t-1, t): t \equiv i \, (\bmod\; 4),  t \in \{0, \dots, n-1\} \}  
       \\ &\quad\quad\bigcup \{ (t-1, t): (t-n) \equiv i (\bmod\; 4),  t \in \{n, \dots, 2n-1\} \}.
   \end{align*}
   Let $P_i = \tilde{P}_i$ for $i\in\{1,2\}$ but $P_0 = \tilde{P}_0 \setminus \{(2n-1,0), (n-1,n)\}$ and  $P_3 = \tilde{P}_3 \cup \{(2n-1, 0), (n-1, n)\}$. Define $U_i = \{u: uv \in P_i\}$ and $V_i = \{v: uv \in P_i\}$ for $i\in\{0,1,2\}$, but  $U_3 = \{u: uv \in \tilde{P}_3\} \cup \{0,n\}$ and $V_3 = \{v: uv \in \tilde{P}_3\} \cup \{n-1, 2n-1\}$. Note that $|U_i|=|V_i| = n_i$ is even: $n_i = \frac {n-2} 2$ for $i\in\{0,2\}$ and $n_i = \frac {n+2} 2$ for $i\in\{1,3\}$.

   We again use Lemma~\ref{lem.Knm} for each $i\in\{0,1,2,3\}$ to construct an embedding $\mathcal{R}_i$ of a complete bipartite graph $G_i$ with parts $(U_i, V_i)$, with the property that the arcs in $P_i$ lie on the special faces of $\mathcal{R}_i$. 
   Specifically, we define the permutations $\bar{u}_i = u_{i,0} u_{i,1}\dots u_{i,n_i-1}$ and $\bar{v}_i = v_{i,0} v_{i,1}\dots v_{i,n_i-1}$
   (and, implicitly, the matchings of $P_i$) by setting the following cycles as special faces of the embedding $\mathcal{R}_i$, $i \in \{0,1,2,3\}$:
   \begin{align*} 
       &(v_{i,2k}, u_{i,2k+1}, v_{i,2k+1}, u_{i, 2k}) = 
       \left\{
\begin{array}{l l}     
    C_{i, k}, &  k \in S_i; \\
    (n, 2n-1, 0, n-1), & i=3, \, k = \frac {n-2} 4.
\end{array}\right.
       \end{align*}
       Here $u_{i, n_i} = u_{i,0}$, $v_{i,n_i}=v_{i,0}$ and $C_{i,k}$, $S_i$ are as follows (see also Figure~\ref{fig.circle10})

   \begin{center}
   \begin{tabular} {l | c | l}
       $i$ & $C_{i,k}$ & $S_i$ \\ \hline
       0 & $(4k + 3, 4k+4, 2n - 4k - 3, 2n-4k - 2)$ & $0, \dots, \frac {n-2} 4 - 1$ \\
       1 & $(4k, 4k+1, 2n - 4k - 2, 2n-4k-1)$ & $0, \dots, \frac {n-2} 4$ \\
       2 & $(4k+1, 4k+2, 2n - 4k - 5, 2n-4k-4)$ & $0, \dots, \frac {n-2} 4 - 1$ \\
       3 & $(4k+2, 4k+3, 2n - 4k - 4, 2n - 4k-3)$ & $0, \dots, \frac {n-2} 4 - 1$
   \end{tabular}
   \end{center}
   
   We will now combine $\mathcal{R}_0, \dots, \mathcal{R}_3$ similarly as in the case $n \equiv 0 \,(\bmod \, 4)$, but with one important difference:
   before concatenating rotations, we glue $\mathcal{R}_1$ and $\mathcal{R}_3$ 
   along the face on 4 shared vertices. 
   
   The gluing operation
   is carried out as follows.
   Let $\mathcal{F}_i$, $\mathcal{F}'_i$ be the sets of faces $\mathcal{F}(n_i)$, $\mathcal{F}'(n_i)$ defined in (\ref{eq.cylinder}) and (\ref{eq.cylinderprime}) respectively
   for $\mathcal{R}_i$. 
   Note that $\mathcal{F}'_1$ contains a face 
   \begin{align*}
       &F' = (u_{1, \frac n 2}, v_{1,0}, u_{1,0}, v_{1,\frac n 2}) = (n-1, 0, 2n-1, n),
       \intertext{and $\mathcal{F}_3$ contains a face}
       &F = (v_{3, \frac {n-2} 2}, u_{3, \frac n 2}, v_{3, \frac n 2}, u_{3, \frac {n-2} 2}) = (n, 2n-1, 0, n-1).
   \end{align*}
     Let $S=\{0,n-1,n,2n-1\}$. We have $V(G_1) \cap V(G_3) = S$.

     For $i \in \{0,1,2,3\}$ and $v \in V(G_i)$ let $p^i_v = (p^{i}_{v,0}, \dots, p^i_{v, n_i-1})$ be the rotation at $v$ in $\mathcal{R}_i$. 
   Choose the indices so that
   $p^i_{v,0}$ and $p^i_{v,n_i-1}$ are the neighbours of $v$ on the special face $F_i$ containing $v$. Also for $v \in S$ define
   permutations  $q^1_v = (q^1_{v,0}, \dots, q^1_{v,n_1-1})$, such 
   that $q^1_v$ is $p^1_v$ with $q^1_{v,0}$ and $q^1_{v, n_1-1}$ the neighbours of $v$ on the face $F'$.

\begin{figure}
    \begin{subfigure}[b]{.5\textwidth}
           \centering
           \includegraphics[scale=0.45]{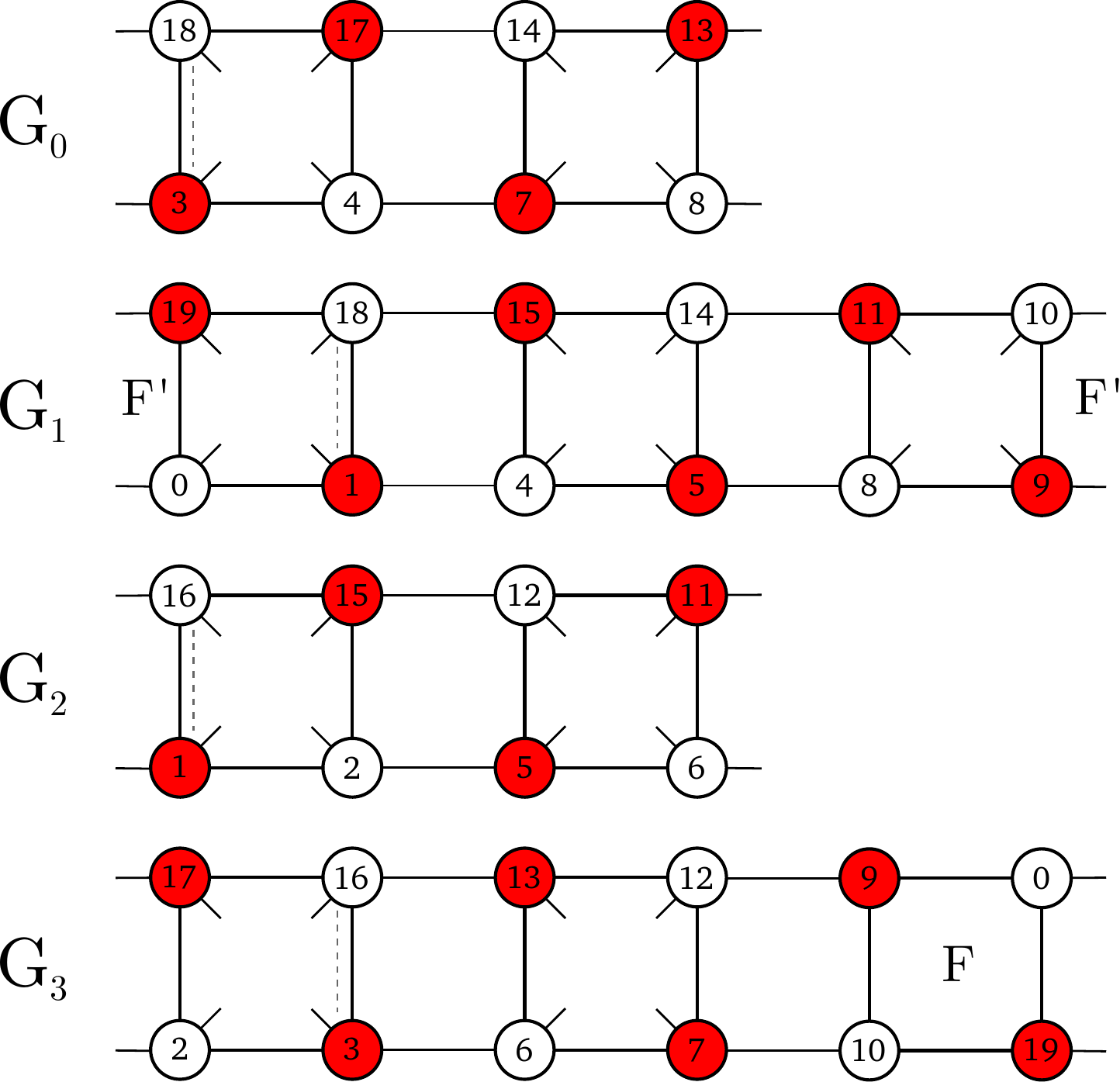}
           \caption{}
           \label{fig.circle10a}
        \end{subfigure}
        \begin{subfigure}[b]{.5\textwidth}
           \centering
           \raisebox{0.4 cm}{\includegraphics[scale=0.45]{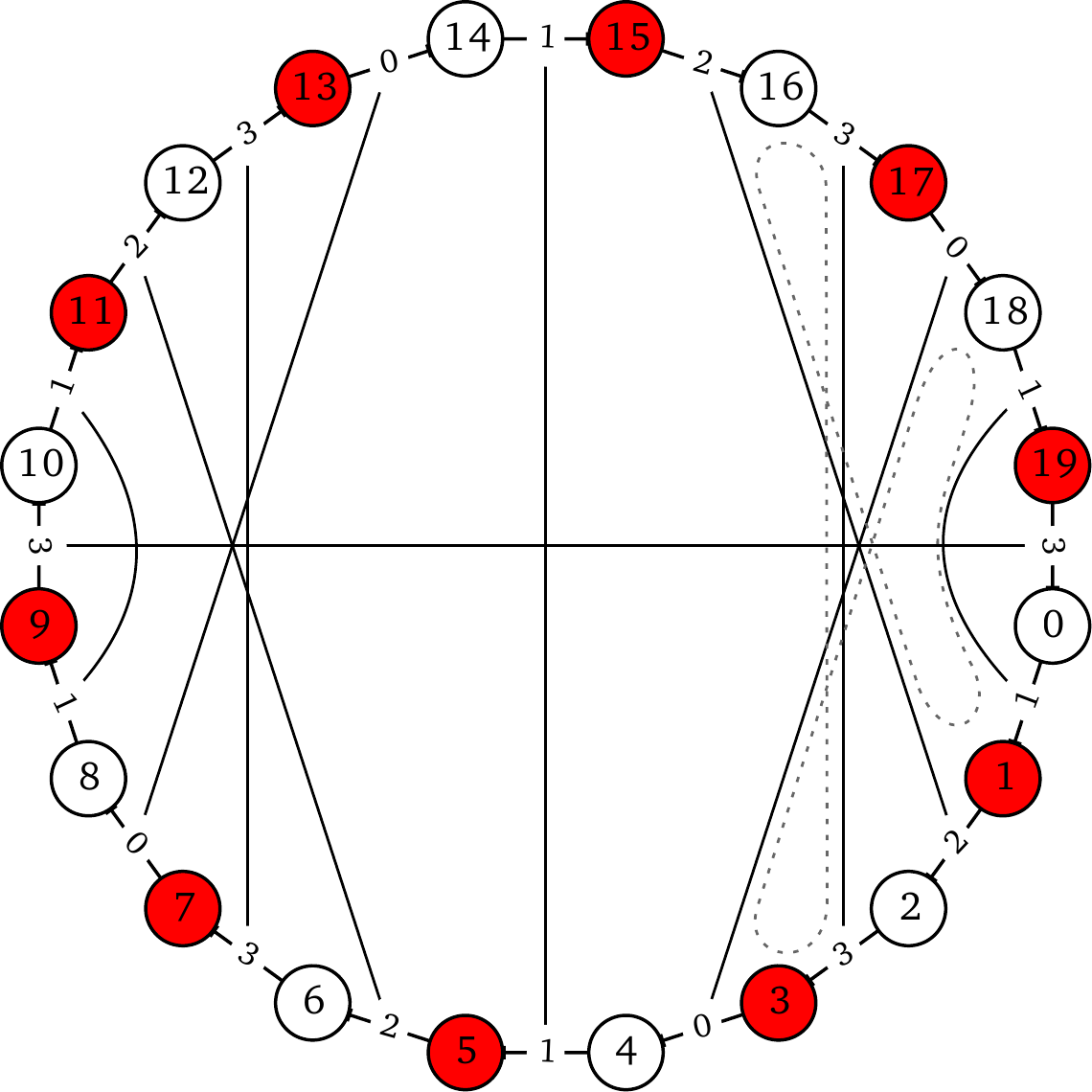}}
           \caption{}
           \label{fig.circle10b}
        \end{subfigure}
        \caption{The case $n=10$. (a) The embeddings of $G_i$ (only faces in $\mathcal{F}_i \cup \mathcal{F}_i'$ shown). (b) The matchings of arcs in $P_i$, $i=0,1,2,3$. The dashed edges show the face $F_{1,0} = (1,18,3,16)$.}
    \label{fig.circle10}
\end{figure}

   Let $G_{13}$ be a graph with vertex set $V(G_1) \cup V(G_3)$ and edge set $E(G_1) \cup E(G_3)$. This graph is bipartite, with parts $U_{13}$ and  $V_{13}$, where
   \[ 
       U_{13} = \{0,2,\dots,2n-2\}, \quad V_{13} = \{1,3, \dots, 2n-1\}.
   \]
   Since $F$ and $F'$ have opposite directions, $q^1_{v,0}=p^3_{v,n_3-1}$ and $ q^1_{v,n_1-1}=p^3_{v,0}$ for $v \in S$.
   Let the embedding of $G_{13}$ be $\mathcal{R}_{13} = \{p^{13}_v: v \in V(G_{13})\}$ where
   \begin{align*}
       &p^{13}_v = \left\{
\begin{array}{l l}  
    (q^1_{v,0}, q^1_{v,1}, \dots, q^1_{v, n_1-1}, p^3_{v,1}, p^3_{v,2}, \dots, p^3_{v,n_3-2}), &v \in S;
       \\ p^1_v, & v \in V(G_1) \setminus S;
       \\ p^3_v, & v \in V(G_3) \setminus S.
   \end{array}\right.
   \end{align*}
   The faces of $\mathcal{R}_{13}$ are the union of faces of $\mathcal{R}_1$ and $\mathcal{R}_3$ except $\{F, F'\}$; in particular
   they are all quadrangular. Call the faces in the set $\mathcal{F}_{13} = \mathcal{F}_1  \cup \left(\mathcal{F}_3  \setminus \{F\} \right)$, the \textit{special faces} of $\mathcal{R}_{13}$. $\mathcal{F}_{13}$ consists of $\frac {n-2} 2 + 1$ vertex-disjoint faces and covers $V(G_{13}) = \mathbb{Z}_{2n}$.
   Note that for $v \in S$, $p^{13}_v$ is a permutation of $V_{13}$ ($U_{13}$) 
   if $v$ is even (odd).

   Now the sets $V(G_0)$ and $V(G_2)$ partition $\mathbb{Z}_{2n} \setminus S$ into two subsets, each containing $\frac{n-2} 2$ even and $\frac{n-2} 2$ odd vertices. For $v \in \mathbb{Z}_{2n} \setminus S$ we let $t(v) \in \{0,2\}$ denote the index of the graph $G_{t(v)}$ it belongs to. It is easy to see that 
   the neighbours of $v$ in $G_{13}$ and the neighbours of $v$ in $G_{t(v)}$
partition the set of odd (even) vertices in $\mathbb{Z}_{2n}$ if $v$ is even (odd).
   
   Thus the rotation system $\mathcal{R} = \{\pi_v: v\in \mathbb{Z}_{2n}\}$ defined by
   \begin{align*}
     &\pi_v = \left\{
\begin{array}{l l}  
    p^{13}_v, &v \in S; \\
    (p^0_{v,0}, p^0_{v,1}, \dots, p^0_{v, \frac{n-2} 2}, p^{13}_{v,0}, p^{13}_{v,1}, \dots, p^{13}_{v, \frac{n+2} 2}), & v \in G_0; \\
    (p^2_{v,0}, p^2_{v,1}, \dots, p^2_{v, \frac{n-2} 2}, p^{13}_{v,0}, p^{13}_{v,1}, \dots, p^{13}_{v, \frac{n+2} 2}), & v \in G_2.
\end{array}\right.
   \end{align*}
   is an embedding of $K_{n,n}$ with parts $\{0,2, \dots, 2n-2\}$ and  $\{1, 3, \dots, 2n-1\}$.

   Suppose $v \in \mathbb{Z}_{2n} \setminus S$.
   By  the definition of $\mathcal{R}_i$ and $p^i_v$ we have that $p^i_{v, 0} = v+1$ if $v$ is even and $i \in \{1,3\}$ or $v$ is odd and $i \in \{0,2\}$.
   Similarly, $p^i_{v, n_i-1} = v-1$ if $v$ is odd and $i \in \{1,3\}$ or $v$ is even and $i \in \{0,2\}$. Thus
   $\pi_v = (\dots, v-1, v+1, \dots)$ if $v$ is even and $\pi_v = (v+1, \dots, v-1)$ if $v$ is odd.
   Now 
   $\mathcal{R}_1$ and hence also $\mathcal{R}_{13}$ 
   contains the face $C_{1, 0} = (0,1, 2n-2,2n-1)$
   and the face $C_{1, \frac {n-2} 4} = (n-2, n-1, n, n+1)$. This implies $p^{13}_v = (\dots, v-1, v+1, \dots)$ for $v \in S$.
   Thus  $\pi_v = (\dots, v-1, v+1, \dots)$  for all $v \in \mathbb{Z}_{2n}$ and $H$ is a face of $\mathcal{R}$.

   To complete the proof, we show that all faces of $\mathcal{R}$, apart from $H$, are of length $4$. Then by Euler's formula the genus $g$ of $\mathcal{R}$ satisfies $2 - 2g = 2n - n^2 + (2n^2 - 2n) / 4 + 1$, or $g = (n^2 - 3n + 2)/4 = L(n)$, as stated.

   As before, we only need to check the lengths of new faces in $\mathcal{R}$, that is,
   the faces formed from the arcs of the special faces of $\mathcal{R}_0$, $\mathcal{R}_2$ and $\mathcal{R}_{13}$. 
   Since these embeddings have $\frac {n-2} 4$, $\frac {n-2} 4$ and $\frac {n-2} 2 + 1$ special faces respectively,
   there are in total $4 (n - 1)$ such arcs. 
   
   Now $\mathcal{R}$ contains for each $k \in \{0, \dots, \frac {n-2} 4-1\}$ a pair of faces $F_{1,k}$ and $F_{2,k}$, where
   \begin{align*}
       & F_{1,k} = (4k+1, 2n-4k-2, 4k+3, 2n-4k-4),
    \\ & F_{2,k} = (4k+2, 2n-4k-5, 4k+4, 2n-4k-3).
   \end{align*}
   The faces in $\mathcal{F} = \{F_{1,k}: \{0, \dots, \frac {n-2} 4-1\}\} \cup \{F_{2,k}: \{0, \dots, \frac {n-2} 4-1\}\}$ are pairwise arc-disjoint and there are $2(n-2)$ of them. Thus $\{H\} \cup \mathcal{F}$ covers
   all $2(n-2) + 2n  = 4 (n-1)$ arcs from the special faces. This completes the proof.

   The faces in $\{H\} \cup \mathcal{F}$ can again be traced in a simple arc matching graph, see Figure~\ref{fig.circle10}. The illustration indicates one extra face $F=(0,n-1,n,2n-1)$ which is lost from $\mathcal{R}$ when gluing.
\end{proofof}

\vskip 1 cm

\textbf{Acknowledgement. } \textit{I would like to sincerely thank Rimvydas Krasauskas who introduced me to the road interchange problem and suggested to model it via graph embeddings.}


\begin{thebibliography}{100}
    \bibitem{bouchet1978} A. Bouchet, Orientable and nonorientable genus of the complete bipartite graph, \textit{J. Combin. Theory Ser. B} \textbf{24} (1978), 24--33.
    \bibitem{craft1998} D. L. Craft, On the genus of joins and compositions of graphs, \textit {Discrete Math.} \textbf{178} (1998), 25--50. 

\bibitem{daviesjokiniemi2008}   N. Davies and E. Jokiniemi. \textit{Dictionary of architecture and building construction}, Architectural Press, New York (2008).

\bibitem{ellingham2014} M. N. Ellingham, Construction techniques for graph embeddings, 
    Rogla Summer School, Vanderbilt University, USA (2014). 


\bibitem{ellinghamschroeder2014} M. N. Ellingham and J. Z. Schroeder, Orientable hamilton cycle embeddings of complete tripartite graphs II: voltage graph constructions and applications, \textit{J. Graph Theory} \textbf{77} (2014) 219--236.

\bibitem{ellinghamstephens2009} M. N. Ellingham and D. C. Stephens, The orientable genus of some joins
    of complete graphs with large edgeless graphs, \textit{Discrete Math.} \textbf{309} (2009), 1190--1198.

\bibitem{esz2006} M. N. Ellingham, D. C. Stephens and X. Zha, The non-orientable genus of
    complete tripartite graphs, \textit{J. Combin. Theory Ser. B} \textbf{96} (2006) 529--559.

\bibitem{garberhoel2009} N. J. Garber and L. A. Hoel, \textit{Traffic and Motorway Engineering}, 4th ed., Cengage Learning, Ohio (2009).


\bibitem{grosstucker1987} J. Gross and T. Tucker, \textit{Topological graph theory}, John Wiley, New York (1987).
\bibitem{harsfieldringel1990} N. A. Harsfield and G. Ringel, Quadrangular embeddings of the complete even k-partite graph, \textit{Discrete Math.} \textbf{81} (1990) 19--23. 

\bibitem{jbk2010} A. Juozapavičius, S. Buteliauskas  and R. Krasauskas,
        Improvement possibilities of city transportation system by using PINAVIA interchange, \textit{Environ. Res. Eng. Manag.} \textbf{3} (2010), 36--42.

\bibitem{ksz2004} K. Kawarabayashi, D. C. Stephens and X. Zha, Orientable and nonorientable genera of some complete tripartite graphs, 
    \textit{SIAM J. Discrete Math.} 18 (2004) 479--487.

\bibitem{kotov1983} Yu. V. Kotov, Topology of an automotive interchange, \textit{Kvant} \textbf{5} (1983) 52--52. Available online at \url{https://kvant.ras.ru/1983/05/topologiya\_avtomobilnoj\_razvya.htm} (in Russian).

\bibitem{moharthomassen2001} B. Mohar and C. Thomassen, \textit{Graphs on Surfaces}, Johns Hopkins University Press, Baltimore (2001). 

\bibitem{ringel1965} G. Ringel, Das Geschlecht des vollst\"{a}ndiger Paaren Graphen, \textit{Abh. Math. Sem. Univ. Hamburg} \textbf{28} (1965), 139--150. 
\bibitem{stahlwhite1976} S. Stahl and A.T. White, Genus embeddings of some complete tripartite graphs, \textit{Discrete Math.} \textbf{14} (1976), 279--296.

\bibitem{white1965} A. T. White, The genus of cartesian products of graphs, PhD thesis, Michigan State University (1969).
\bibitem{youngs1963} J. W. T. Youngs, Minimal imbeddings and the genus of a graph, \textit{J. Math. Mech.} \textbf{12} (1963), 303--315.
\end{thebibliography}
\end{document}